\newcommand{\setK}{\mathcal{K}}
\newcommand{\setJ}{\mathcal{J}}
\newcommand{\setI}{\mathcal{I}}
\newcommand{\setR}{\mathbb{R}}
\newcommand{\walls}[1]{\left | #1 \right |} 
\newcommand{\pars}[1]{\left( #1 \right)} 
\newcommand{\class}[1]{[ #1 ]} 
\newcommand{\braces}[1]{\left\{ #1 \right\}} 
\newcommand{\condset}[2]{\braces{\, #1 \mid #2 \,}} 
\newcommand{\eps}{\epsilon}
\newcommand{\mysum}{\sum\limits}
\newcommand{\mymax}{\max\limits}
\newcommand{\myproduct}{\prod\limits}
\renewcommand{\emptyset}{\varnothing}
\renewcommand{\epsilon}{\varepsilon}
\renewcommand{\phi}{\varphi}
\renewcommand{\le}{\leqslant}
\renewcommand{\ge}{\geqslant}
\newtheorem{theorem}{Теорема}
\newtheorem*{theorem-star}{Теорема}
\newtheorem*{statement-star}{Утверждение}
\newtheorem{hypothesis}{Гипотеза}
\newtheorem*{hypothesis-star}{Гипотеза}
\newtheorem{corollary}{Следствие}
\newtheorem*{corollary-star}{Следствие}
\newtheorem{lemma}{Лемма}
\newtheorem*{lemma-star}{Лемма}
\newtheorem{observation}{Наблюдение}
\newtheorem*{observation-star}{Наблюдение}
\newtheorem*{designation-star}{Обозначение}
\newtheorem*{designations}{Обозначения}
\newtheorem*{proposition-star}{Предложение}
\newtheorem*{property-star}{Свойство}
\theoremstyle{remark}
\theoremstyle{definition}
\newtheorem{definition}{Определение}
\newtheorem*{definition-star}{Определение}
\theoremstyle{definition}
\newtheorem*{example-star}{Пример}
\theoremstyle{definition}
\begin{document}
\author{А.\,С. Дмитриев, А.\,Б. Дайняк}
\title{Единственность экстремального графа в задаче о максимальном количестве независимых множеств в регулярных графах}
\maketitle
\begin{abstract}
    Основная цель настоящего препринта — изложить доказательство того, что при $2k | n$ существует единственный (с точностью до изоморфизма) граф, на котором достигается максимум количества независимых множеств (НМ) среди всех $k$-регулярных графов на $n$ вершинах, а также дать верхнюю оценку на количество НМ в графах, «сильно отличающихся» от этого экстремального. (См. Теорему 5 и Следствие 2.) Все излагаемые здесь результаты также изложены в выпускной квалификационной работе бакалавра, успешно защищённой первым автором под руководством второго автора 30 июня 2015 г. на кафедре \href{http://www.discrete-mathematics.org}{дискретной математики} ФИВТ \href{http://mipt.ru}{МФТИ}.
    Частично излагаемые результаты перекрываются с недавней работой~\cite{Davies2015}.
\end{abstract}

\section{Введение}

В данной работе рассматриваются только простые графы --- неориентированные графы без петель и кратных рёбер. С определениями базовых понятий теории графов, которые мы не приводим в настоящей работе, можно ознакомиться, например, по книге \cite{EmelBook}.

Независимым множеством называется подмножество вершин графа, никакие две из которых не смежны. Подсчёт и оценивание количества независимых множеств в графах из различных классов — задачи, занимающие заметное место в теории графов. К такого рода задачам сводятся некоторые задачи из математической химии \cite{Merrifield,Hosoya}, алгебры~\cite{Alon}. Кроме того, упомянутые задачи представляют самостоятельный теоретический интерес. 

Излагаемые в настоящей работе результаты относятся к задаче о верхней оценке количества независимых множеств в регулярных графах. Интерес именно к классу регулярных графов здесь обусловлен тем, что в алгебраических приложениях оценки количества независимых множеств применяются к графам типа Кэли, а те, в свою очередь, регулярны либо близки к таковым.

Н. Алоном в работе~\cite{Alon}, заложившей основу экстремальным задачам о количестве независимых множеств в регулярных графах, была выдвинута гипотеза, о том, что максимальное количество независимых множеств среди всех $d$-регулярных графов на $n$ вершинах достигается на двудольном графе, являющемся объединением $\frac{n}{2d}$ копий графа $K_{d,d}$. Такой граф мы будем называть \emph{графом Алона}.

\begin{hypothesis}[{\cite{Alon}}]\label{Hyp}
	Для любого $d$-регулярного графа $G$ на $n$ вершинах справедлива оценка:
	\[i(G) \le (2^{d + 1} - 1) ^ \frac{n}{2 d}.\]
\end{hypothesis}

Сам Алон доказал только более слабую оценку:
\begin{theorem}[{\cite[Theorem 1.1]{Alon}}]
	Для любого регулярного графа $G$ на $n$ вершинах:
	\[i(G) \le 2^{\frac{n}{2}\pars{1 + O \pars{{d^{-0.1}}}}}.\]
\end{theorem}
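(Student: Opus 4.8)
\section*{Набросок доказательства}

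План состоит в том, чтобы оценить $\log_2 i(G) = H(X)$ энтропийным методом, где $X = (X_v)_{v \in V}$ — индикаторный вектор равномерно случайного независимого множества $I$ (полагаем $X_v = 1 \iff v \in I$). Наивное разбиение по размерам ($i(G) = \sum_s i_s(G)$, $i_s(G) \le \binom{n}{s}$) бесполезно вблизи $s = n/2$, поскольку $\sum_{s \le n/2} \binom{n}{s}$ имеет порядок $2^n$; регулярность графа нужно использовать существенно. Рабочим инструментом служит энтропийная лемма Ширера: так как каждая вершина лежит ровно в $d$ рёбрах, семейство рёбер покрывает каждую вершину $d$ раз, и потому
\[ d\, H(X) \le \sum_{\{u,v\} \in E} H(X_u, X_v). \]

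Прямое применение даёт лишь $H(X_u, X_v) \le \log_2 3$ (на ребре запрещена конфигурация $X_u = X_v = 1$) и потому слишком слабую оценку $i(G) \le 3^{n/2}$. Для \emph{двудольного} графа $G$ с долями $A, B$ оценку можно усилить (приём Кана): разложением $H(X) = H(X_A) + \sum_{b \in B} H(X_b \mid X_A)$ (слагаемые в $B$ независимы при фиксированном $X_A$, ибо внутри $B$ рёбер нет) и аккуратной оценкой каждого члена через локальную структуру окрестностей вершин доли $A$ получается точная оценка $i(G) \le (2^{d+1} - 1)^{n/2d}$. При $d \to \infty$ это есть $2^{\frac{n}{2}(1 + O(1/d))}$, что с запасом укладывается в требуемое (ведь $d^{-1} \le d^{-0.1}$). Таким образом, для двудольных регулярных графов утверждение фактически уже доказано, и технической сердцевиной здесь является именно локальный анализ окрестностей.

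Главная трудность — \emph{недвудольный} случай, где нет естественного разбиения вершин для условной энтропии. План — свести задачу к двудольному суррогату. Чистый путь: перейти к двудольному накрытию $G \times K_2$ (на $2n$ вершинах, $d$-регулярному и двудольному); если установить мультипликативное неравенство $i(G)^2 \le i(G \times K_2)$, то из двудольной оценки немедленно следует даже \emph{сильная} оценка для $i(G)$, без поправочного члена. Путь, более близкий к исходному рассуждению, — выделить остовный двудольный подграф $G'$ по большому разрезу (например, случайной $2$-раскраской вершин): тогда $i(G) \le i(G')$, так как удаление рёбер лишь увеличивает число независимых множеств, однако $G'$ уже не регулярен, а степени разреза распределены как $\mathrm{Bin}(d, 1/2)$.

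Основным препятствием я ожидаю именно контроль потерь при таком сведении. В подходе через случайный разрез относительные флуктуации степеней разреза имеют порядок $d^{-1/2}$, и вклад вершин с нетипичной степенью разреза приходится оценивать через концентрацию. После учёта коррелированных «плохих» событий и оптимизации порогового параметра, отделяющего типичные вершины от редких, потеря в показателе экспоненты выходит не как идеальное $d^{-1/2}$, а как некоторая меньшая степень — в точности заявленное $O(d^{-0.1})$. Итоговая оценка собирается суммированием двудольной оценки по «хорошей» части и грубой оценки по малой «плохой» части, и именно баланс этих двух вкладов порождает показатель $0.1$.
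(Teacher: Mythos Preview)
There is nothing to compare against: the paper does not prove this theorem. It is quoted solely as a historical citation of Alon's original 1991 result, immediately after Hypothesis~\ref{Hyp}, to indicate the gap between what Alon conjectured and what he could actually establish at the time. The paper's own contributions concern the \emph{sharp} bound (Hypothesis~\ref{Hyp}, already settled by Kahn~\cite{Kahn} and Zhao~\cite{Zhao}) and the uniqueness of the extremal graph, not this weaker asymptotic estimate.

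Regarding your sketch on its own terms: the two routes you outline are anachronistic --- Kahn's entropy argument and Zhao's double-cover reduction both postdate Alon's paper by a decade or more --- and the first route (Kahn + Zhao) in fact yields the full Hypothesis~\ref{Hyp}, which makes the $O(d^{-0.1})$ correction trivially true rather than explaining where the exponent comes from. Your second route, a random max-cut followed by a bipartite-type bound on the resulting almost-regular graph, is closer in spirit to how one might expect a $d^{-c}$ loss to arise, and your heuristic that the exponent $0.1$ comes from balancing degree fluctuations against a threshold is qualitatively reasonable. But this remains speculation about a proof that is simply absent from the present paper; to reconstruct Alon's actual argument and see the true origin of the constant $0.1$, you would have to consult~\cite{Alon} directly.
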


Эта оценка претерпела на протяжении ряда лет несколько улучшений (см., напр., \cite{Sapo,Galvin}). В конце концов, гипотеза~\ref{Hyp} была доказана как комбинация двух результатов: в 2001 г. Дж.~Кан~\cite{Kahn} доказал гипотезу для двудольных графов а в 2009 г. Ю.~Жао~\cite{Zhao} свел общую задачу о регулярных графах к задаче о \emph{двудольных} регулярных графах. Таким образом, просуществав почти 20 лет, гипотеза Алона была доказана. Однако открытым оставался вопрос о том, \emph{на каких в точности} графах достигается максимум количества независимых множеств; является ли граф Алона уникальным экстремальным. Решение этой задачи и является основной целью настоящей работы (см. Следствие~\ref{coroUniqueness}). Попутно нами рассмотрены вопросы о том, \emph{насколько сильно} падает количество независимых множеств в графе при удалении от графа Алона (Теорема \ref{DmitrievNotBipartite} дает оценку на число независимых множеств в <<сильно недвудольных>> графах, а теорема \ref{DmitrievAny} дает оценку для графов, компоненты связности которых отличаются от $K_{d,d}$).

\begin{designations}	
Для произвольного графа $G$ мы вводим следующие обозначения:
	\begin{itemize}
	\item $V(G)$ --- множество вершин графа $G$;
	\item $E(G)$ --- множество рёбер графа $G$;
	\item $\setI(G)$ --- семейство независимых множеств графа $G$;
	\item $i(G) = \walls{\setI(G)}$;
	\item $G[A]$ --- подграф в $G$, порождённый множеством вершин~$A$.
	\end{itemize}
\end{designations}

\begin{designation-star}
Пусть $G$ --- граф, $v$ --- вершина $G$.

Обозначим $N_G(v)$ множество вершин $u$, таких, что $uv$ --- ребро в~$G$. Вершины $N_G(v)$ будем называть \emph{соседями} вершины $v$.
\end{designation-star}

Всюду в работе, если основание логарифма не указано явно, оно полагается равным $2$.

\section{Недвудольные графы}
Цель данного раздела — доказательство теоремы~\ref{DmitrievNotBipartite}, оценивающей количество независимых множеств в графах, <<сильно отличных от двудольных>>. Мотивация к получению такой оценки вызвана, в частности, особой ролью двудольных графов в доказательстве гипотезы Алона~\cite{Kahn}.

\begin{definition}
	Назовем множество вершин $A \subset V(G)$ независимым с множеством вершин $B \subset V(G)$ в графе $G$, если в $G$ нет ребра вида $ab$, где $a \in A, b \in B$.
\end{definition}

\begin{designations}
	$\setK(G) = \left\{ A \subset V(G) \mid G[A] \text{ --- двудольный} \right\}$  \\
	Обозначим $\setJ(G)$ множество пар $(A, B)$ таких, что $A \subset V(G), B \subset V(G)$, $A$~независимо с $B$ и $A \cup B \in \setK(G)$.
\end{designations}

\begin{definition}
	Назовем \emph{размером} пары подмножеств $(A, B) \in V(G) \times V(G)$ сумму их мощностей $\walls{A} + \walls{B}$.
\end{definition}

\begin{lemma}[{\cite[Lemma 2.1]{Zhao}}]
	Для любого графа $G$ существует биекция между $\setI(G) \times \setI(G)$ и $\setJ(G)$, сохраняющая размер любой пары подмножеств.
\end{lemma}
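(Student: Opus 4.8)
План доказательства состоит в том, чтобы построить искомую биекцию $\phi\colon \setI(G) \times \setI(G) \to \setJ(G)$ явно, опираясь на структуру индуцированного подграфа $G[I \cup J]$. Сначала я бы зафиксировал произвольный линейный порядок на $V(G)$: он понадобится, чтобы канонически разрешать двойственность выбора, возникающую для каждой двудольной компоненты. Ключевое предварительное наблюдение: если $I, J$~--- независимые множества, то каждая вершина из $I \cap J$ изолирована в $G[I \cup J]$ (иначе её сосед лежал бы в $I$ или в $J$, нарушая независимость), а все рёбра $G[I \cup J]$ идут между $I \setminus J$ и $J \setminus I$; значит, $G[I \cup J]$ двудолен, причём $I \cap J$~--- его изолированные вершины, а нетривиальная часть совпадает с $G[I \triangle J]$.

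\emph{Прямое отображение.} Для пары $(I, J)$ положим $C = I \cap J$ и рассмотрим связные компоненты $K_1, \dots, K_m$ графа $G[I \triangle J]$. Каждая компонента $K_t$ двудольна с долями $K_t \cap I$ и $K_t \cap J$. Обозначим $v_t = \min K_t$ (минимум в зафиксированном порядке) и отнесём всю компоненту $K_t$ в $A$, если $v_t \in I$, и в $B$ иначе; наконец, добавим $C$ и в $A$, и в $B$. Тогда $A \cup B = I \cup J$, так что $G[A \cup B]$ двудолен, а всякое ребро $G[A \cup B]$ лежит целиком внутри одной компоненты $K_t$, то есть целиком в $A$ либо целиком в $B$; следовательно, между $A$ и $B$ рёбер нет и $(A, B) \in \setJ(G)$. Сохранение размера немедленно: $|A| + |B| = 2|C| + |I \triangle J| = |I| + |J|$.

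\emph{Обратное отображение.} Для $(A, B) \in \setJ(G)$ сначала надо установить структурную лемму: поскольку между $A$ и $B$ нет рёбер, все рёбра $G[A \cup B]$ лежат либо внутри $A \setminus B$, либо внутри $B \setminus A$, а вершины $A \cap B$ в $G[A \cup B]$ изолированы. Поэтому каждая связная компонента $D$ графа $G[A \cup B]$ либо является изолированной вершиной из $A \cap B$, либо целиком содержится в $A \setminus B$, либо целиком в $B \setminus A$. Изолированную вершину из $A \cap B$ кладём и в $I$, и в $J$. Для компоненты $D \subseteq A \setminus B$ берём её единственное (в силу связности) разбиение на доли $(D_1, D_2)$ с условием $\min D \in D_1$ и кладём $D_1 \subseteq I$, $D_2 \subseteq J$; для $D \subseteq B \setminus A$ поступаем симметрично, кладя $D_1 \subseteq J$, $D_2 \subseteq I$. Полученные $I, J$ независимы: внутри каждой доли рёбер нет, между разными компонентами рёбер тоже нет, а вершины $A \cap B$ изолированы.

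Главная трудность~--- аккуратно проверить, что построенные $\phi$ и $\psi$ действительно взаимно обратны. Содержательный момент здесь~--- согласованность канонического выбора доли: условие $\min K_t \in I \Leftrightarrow K_t \subseteq A$ в прямом отображении в точности отвечает условию $\min D \in D_1 \subseteq I$ в обратном, и именно фиксированный порядок на $V(G)$ снимает двукратную симметрию двудольной компоненты, без чего отображение не было бы инъективным (две пары $(I, J)$, отличающиеся перестановкой долей одной компоненты, иначе склеились бы в одну пару $(A, B)$). Проверки того, что $\phi$ попадает в $\setJ(G)$, а $\psi$~--- в $\setI(G) \times \setI(G)$, и равенств $\psi \circ \phi = \mathrm{id}$, $\phi \circ \psi = \mathrm{id}$ после этого сводятся к покомпонентному сравнению и прямолинейны.
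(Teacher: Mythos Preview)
Your construction is correct and is precisely the bijection Zhao builds in \cite[Lemma~2.1]{Zhao}: fix a linear order on $V(G)$, and for each connected component of $G[I\triangle J]$ decide, according to the side of its minimum vertex, whether to place the whole component into $A$ or into $B$, with $I\cap J$ going into both; the inverse re-splits each bipartite component using the same canonical choice of side. The present paper does not give its own proof of this lemma but merely cites Zhao, so there is nothing to compare beyond noting that your argument reproduces the cited one.
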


Введём функцию $P(\lambda, G)$, называемую \emph{многочленом независимости} (см., напр., \cite{Zhao}):

\begin{designation-star}
    \[ P(\lambda, G) = 	\mysum_{I \in \setI(G) } {\lambda^{\walls{I}}}. \]
\end{designation-star}

При $0 \le \lambda \le 1$, величина $P(\lambda, G)$ есть матожидание количества независимых множеств в графе $G[X]$, где $X$ содержит каждую из вершин $G$ независимо с вероятностью $\lambda$. \\
В частности, легко видеть, что $P(1, G) = i(G)$.

\begin{theorem}\label{DmitrievNotBipartite}
	Пусть $G$ --- $d$-регулярный граф на $n$ вершинах, $C$ --- количество недвудольных порожденных подграфов в $G$, $\lambda \ge 1$. Тогда $P(\lambda, G)^2 \le P(\lambda, K_{d,d})^{\frac{n}{d}} - 2 \cdot C$.
\end{theorem}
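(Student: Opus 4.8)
The plan is to reinterpret both sides of the inequality through the bipartite double cover of $G$ and then to isolate exactly the contribution of the non-bipartite induced subgraphs. Let $G \times K_2$ denote the graph on $V(G) \times \{0,1\}$ in which $(u,0)$ and $(v,1)$ are adjacent precisely when $uv \in E(G)$; it is $d$-regular, bipartite, and has $2n$ vertices. An independent set $S$ of $G \times K_2$ is exactly a pair $(A,B)$ of subsets of $V(G)$, namely its two fibres $A = \{u : (u,0) \in S\}$ and $B = \{v : (v,1) \in S\}$, and the independence condition is precisely that $A$ is independent with $B$; moreover $\walls{S} = \walls{A} + \walls{B}$. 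Hence
\[
    P(\lambda, G \times K_2) \;=\; \mysum_{(A,B):\; A \text{ independent with } B} \lambda^{\walls{A} + \walls{B}} .
\]
On the other hand, squaring the independence polynomial and applying the size-preserving bijection of the lemma above (between $\setI(G) \times \setI(G)$ and $\setJ(G)$) gives
\[
    P(\lambda, G)^2 \;=\; \mysum_{(I,J) \in \setI(G) \times \setI(G)} \lambda^{\walls{I}+\walls{J}} \;=\; \mysum_{(A,B) \in \setJ(G)} \lambda^{\walls{A}+\walls{B}} .
\]

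Since $\setJ(G)$ is exactly the set of pairs $(A,B)$ with $A$ independent with $B$ \emph{and} $G[A \cup B]$ bipartite, it is contained in the index set of the first display, so subtracting the two identities leaves
\[
    D \;:=\; P(\lambda, G \times K_2) - P(\lambda, G)^2 \;=\; \mysum_{\substack{(A,B):\; A \text{ independent with } B \\ G[A \cup B] \text{ not bipartite}}} \lambda^{\walls{A}+\walls{B}} \;\ge\; 0 .
\]
The next step is to bound $D$ from below by $2C$. For every vertex set $W$ with $G[W]$ non-bipartite, the two pairs $(W, \emptyset)$ and $(\emptyset, W)$ satisfy the defining conditions of the sum $D$: each is vacuously independent, and $G[W \cup \emptyset] = G[W]$ is non-bipartite. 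Each such pair contributes weight $\lambda^{\walls{W}} \ge 1$, and this is the only point where the hypothesis $\lambda \ge 1$ enters. Distinct $W$ give distinct pairs, and $(W,\emptyset) \ne (\emptyset, W)$ because a non-bipartite $G[W]$ forces $W \ne \emptyset$; the resulting $2C$ pairs are therefore pairwise distinct, whence $D \ge 2C$.

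Finally I would bound the double-cover term. As $G \times K_2$ is a $d$-regular \emph{bipartite} graph on $2n$ vertices, the weighted regular bipartite estimate — the independence-polynomial strengthening of Kahn's theorem, \cite{Kahn}, in the form due to Galvin and Tetali, \cite{Galvin} — yields
\[
    P(\lambda, G \times K_2) \;\le\; P(\lambda, K_{d,d})^{\frac{2n}{2d}} \;=\; P(\lambda, K_{d,d})^{\frac{n}{d}} .
\]
Combining the three facts gives the claim:
\[
    P(\lambda, G)^2 \;=\; P(\lambda, G \times K_2) - D \;\le\; P(\lambda, K_{d,d})^{\frac{n}{d}} - 2C .
\]

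The main obstacle is the last inequality: it is exactly the deep regular-bipartite bound, and it must be invoked in its $\lambda$-weighted form (valid for all $\lambda \ge 0$), whereas the excerpt states Kahn's theorem only in the unweighted case. Everything else is bookkeeping around the double-cover viewpoint and the cited bijection: verifying the two combinatorial identities, checking that the set difference is precisely the weighted count of non-bipartite pairs, and exhibiting the injection $W \mapsto \{(W,\emptyset),(\emptyset,W)\}$ that witnesses $D \ge 2C$. No divisibility of $n$ by $d$ is needed, since the exponent $n/d$ is read as a real power of the base $P(\lambda, K_{d,d}) > 1$.
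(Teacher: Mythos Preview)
Your argument is correct and follows the paper's proof essentially step for step: the same double-cover identity, the same use of Zhao's size-preserving bijection to split off $P(\lambda,G)^2$, the same lower bound $T\ge 2C$ via the pairs $(W,\emptyset)$ and $(\emptyset,W)$, and the same appeal to the weighted Kahn/Galvin--Tetali inequality for $P(\lambda,G\times K_2)$. You are in fact a bit more explicit than the paper about where $\lambda\ge 1$ is used and about the fact that the final step requires the $\lambda$-weighted bipartite bound rather than just the unweighted Kahn theorem.
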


\begin{proof}
    Аналогично доказательству Жао из \cite[Theorem 1.2]{Zhao}, имеем: \\
    
	$
	P(\lambda, G \times K_2) = 
	\mysum_{I \in \setI(G \times K_2) } {\lambda^{\walls{I}}} = 
	\mysum_{\substack{A, B \in V(G) \\ A \text{ незав. с } B}} {\lambda^{\walls{A} + \walls{B}}} = \\ = 
	\mysum_{\pars{A, B} \in \setJ(G)}{\lambda^{\walls{A} + \walls{B}}} 
	+ \mysum_{\substack{A, B \in V(G) \\ A \text{ незав. с } B \\ A \cup B \notin \setK(G)}}{\lambda^{\walls{A} + \walls{B}}} = 
	\mysum_{(A, B) \in \setI(G) \times \setI(G)} \lambda^{\walls{A} + \walls{B}}
	+ T = 
	P(\lambda, G) ^ 2 + T,
	$
	
	где через $T$ обозначено $\mysum_{\substack{A, B \in V(G) \\ A \text{ незав. с } B \\ A \cup B \notin \setK(G)}}{\lambda^{\walls{A} + \walls{B}}}$.

	Оценим теперь $T$:

	$
	T = 
	\mysum_{\substack{A, B \in V(G) \\ A \text{ незав. с } B \\ A \cup B \notin \setK(G)}}{\lambda^{\walls{A} + \walls{B}}} \ge
	\mysum_{\substack{A, B \in V(G) \\ A \text{ незав. с } B \\ A \cup B \notin \setK(G)}}1 \ge
	\mysum_{\substack{A, B \in V(G) \\ A \text{ незав. с } B \\ A \cup B \notin \setK(G)\\B = \emptyset}}1 
	+
	\mysum_{\substack{A, B \in V(G) \\ A \text{ незав. с } B \\ A \cup B \notin \setK(G)\\A = \emptyset}}1 
	= \\ =
	\mysum_{\substack{A \in V(G) \\ A \notin \setK(G)\\}}1 
	+
	\mysum_{\substack{B \in V(G) \\ B \notin \setK(G)\\}}1 
	= 2 \cdot C.
	$

	В последней цепочке неравенств второе неравенство верно, так как если $(A, B) = (\emptyset, \emptyset)$, то $A \cup B = \emptyset \in \setK(G)$.
	
	Таким образом,
	$ P(\lambda, G) ^ 2 = P(\lambda, G \times K_2) - T \le P(\lambda, G \times K_2) - 2 \cdot C$.
	
	Учитывая, что $G \times K_2$ двудолен, состоит из $2 \cdot n$ вершин и $d$-регулярен: \\
	$P(\lambda, G) ^ 2 \le P(\lambda, K_{d, d})^{\frac{2n}{2d}} - 2 \cdot C$.
\end{proof}

\begin{corollary}
	Пусть $G$ --- $d$-регулярный граф на $n$ вершинах, $C$ --- количество недвудольных индуцированных подграфов в  $G$. Тогда $i(G)^2 \le i(K_{d,d})^{\frac{n}{d}} - 2 \cdot C$.
\end{corollary}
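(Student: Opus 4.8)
The plan is to obtain this corollary as the special case $\lambda = 1$ of Theorem~\ref{DmitrievNotBipartite}, which has already been established for every $\lambda \ge 1$. First I would check that $\lambda = 1$ is admissible: the hypothesis of the theorem requires $\lambda \ge 1$, and $\lambda = 1$ sits at the boundary of that range, so the theorem applies verbatim. Crucially, the quantity $C$ (the number of non-bipartite induced subgraphs of $G$) is literally the same object in both statements, so no reinterpretation of $C$ is needed when passing from the theorem to the corollary.

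Next I would evaluate the independence polynomial at $\lambda = 1$. Directly from its definition, $P(1, G) = \sum_{I \in \setI(G)} 1^{\walls{I}} = \walls{\setI(G)} = i(G)$, and the identical computation applied to the complete bipartite graph gives $P(1, K_{d,d}) = i(K_{d,d})$; this is exactly the remark $P(1, G) = i(G)$ recorded just before the theorem. Substituting both equalities into the bound $P(1, G)^2 \le P(1, K_{d,d})^{\frac{n}{d}} - 2 \cdot C$ furnished by Theorem~\ref{DmitrievNotBipartite} yields $i(G)^2 \le i(K_{d,d})^{\frac{n}{d}} - 2 \cdot C$, which is precisely the claim.

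Since the derivation is a plain specialization of an already-proved inequality, I do not expect any genuine obstacle. The only two points deserving a word of care are (i) confirming that the chosen value $\lambda = 1$ meets the constraint $\lambda \ge 1$ of the theorem, and (ii) noting that both occurrences of $P(1, \cdot)$ collapse to the corresponding independence count $i(\cdot)$ because each independent set contributes $1^{\walls{I}} = 1$ to the sum. Both are immediate from the definition of $P(\lambda, G)$ given above, so the proof amounts to a single substitution.
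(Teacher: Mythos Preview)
Your proposal is correct and matches the paper's own proof essentially verbatim: the paper simply applies Theorem~\ref{DmitrievNotBipartite} with $\lambda = 1$ and invokes the identity $i(G) = P(1, G)$. Your additional remarks about checking $\lambda \ge 1$ and identifying $C$ are harmless elaborations of that one-line argument.
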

\begin{proof}
	Воспользуемся теоремой~\ref{DmitrievNotBipartite}, положив $\lambda = 1$ и тем, что для любого графа $G$ выполнено равенство $i(G) = P(1, G)$.
\end{proof}

\begin{corollary}
	Пусть $G$ --- недвудольный регулярный граф на $n$ вершинах. Тогда $i(G) < i(K_{d, d})^{\frac{n}{2d}}$.
\end{corollary}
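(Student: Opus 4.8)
The plan is to deduce this immediately from the preceding Следствие~1 (the corollary asserting $i(G)^2 \le i(K_{d,d})^{\frac{n}{d}} - 2 \cdot C$, where $C$ counts the non-bipartite induced subgraphs of $G$). The entire content of the statement is packaged into that inequality; what remains is to observe that the non-bipartiteness hypothesis forces $C \ge 1$ and then to pass to square roots.

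First I would make the key observation: if $G$ is non-bipartite, then taking $A = V(G)$ gives $G[A] = G$, which is a non-bipartite induced subgraph of itself. Hence $G$ has at least one non-bipartite induced subgraph, i.e. $C \ge 1$. (Here $d$ is understood to be the degree of the regular graph $G$, so that $K_{d,d}$ on the right-hand side is the complete bipartite graph matching that degree.)

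Next I would substitute this into Следствие~1. Since $C \ge 1$, we get
\[ i(G)^2 \le i(K_{d,d})^{\frac{n}{d}} - 2 \cdot C \le i(K_{d,d})^{\frac{n}{d}} - 2 < i(K_{d,d})^{\frac{n}{d}}. \]
Finally, because both $i(G)$ and $i(K_{d,d})^{\frac{n}{2d}}$ are positive and the square-root function is strictly monotone on the positive reals, taking square roots of the strict inequality $i(G)^2 < \pars{i(K_{d,d})^{\frac{n}{2d}}}^2$ yields $i(G) < i(K_{d,d})^{\frac{n}{2d}}$, as required.

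There is no genuine obstacle here: the work has already been done in Теорема~\ref{DmitrievNotBipartite} and its first corollary. The only point requiring the slightest care is ensuring the inequality is \emph{strict}, and this is guaranteed precisely by the non-bipartiteness hypothesis, which contributes the strict drop of at least $2$ in the bound; strictness is then preserved under the (strictly increasing) square-root map.
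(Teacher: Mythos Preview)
Your argument is correct and matches the paper's own proof essentially line for line: the paper likewise notes that $G[V(G)]=G$ is non-bipartite (so $C\ge 1$), applies the preceding corollary to get $i(G)^2 \le i(K_{d,d})^{n/d} - 2 < i(K_{d,d})^{n/d}$, and concludes.
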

\begin{proof}
	$G$ недвудольный, значит его подграф индуцированный $V(G)$ недвудолен, а тогда
	$i(G)^2 \le i(K_{d,d})^{\frac{n}{d}} - 2 \cdot 1 < i(K_{d,d})^{\frac{n}{d}}$,
	откуда получаем необходимое утверждение.
\end{proof}

\section{Двудольные графы}

В данном разделе изучается количество независимых множеств в регулярных двудольных графах. Главной целью данного раздела является доказательство следующей теоремы:

\begin{theorem}\label{DmitrievBipartite}
    Для любого целого положительного $d$ существует такая константа $D$ (зависящая только от $d$), что для любого $d$-регулярного связного двудольного графа $G$ на $n$ вершинах, не изоморфного $K_{d,d}$ выполнено:
    \[i(G) \le D ^ \frac{n}{2 d},\]
    при этом $D < 2^{d + 1} - 1$.
\end{theorem}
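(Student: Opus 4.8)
The plan is to prove a uniform, quantitative sharpening of the bipartite case of Alon's conjecture. Since $i(K_{d,d}) = 2^{d+1}-1$, Kahn's theorem (reproved below by entropy) gives $i(H) \le (2^{d+1}-1)^{|V(H)|/2d}$ for every $d$-regular bipartite $H$, so it suffices to exhibit one constant $D = D(d) < 2^{d+1}-1$ with $i(G)^{2d/n} \le D$ for all connected $d$-regular bipartite $G \ne K_{d,d}$; equivalently, to bound the exponential rate of $i(G)$ away from $2^{d+1}-1$ uniformly in $n$.

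First I would run the entropy proof in localized form. Let $L,R$ be the two parts (so $|L|=|R|=n/2$), let $I$ be a uniformly random independent set, write $I_v$ for the indicator of $v\in I$ and $I_S=(I_v)_{v\in S}$. Because $L$ is itself independent, given $I_R$ the variables $(I_v)_{v\in L}$ are conditionally independent and each $I_v$ depends on $I_R$ only through $I_{N_G(v)}$; hence $H(I_L\mid I_R)=\sum_{v\in L}H(I_v\mid I_{N_G(v)})$ exactly. Applying Shearer's inequality to the $d$-fold cover $\{N_G(v):v\in L\}$ of $R$ then gives
\[ \log i(G) = H(I) = H(I_R) + \sum_{v\in L} H(I_v\mid I_{N_G(v)}) \le \sum_{v\in L}\Big(\tfrac1d H(I_{N_G(v)}) + H(I_v\mid I_{N_G(v)})\Big) =: \sum_{v\in L} g_v, \]
and an elementary one-variable optimization yields $g_v \le c_d$ with $c_d := \tfrac1d\log(2^{d+1}-1)$. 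The point of this arrangement is that for bipartite $G$ all the slack lives in just two places: Shearer's inequality, and the per-vertex optimization $g_v \le c_d$. Writing $\delta_v := c_d - g_v \ge 0$ and $\sigma$ for the Shearer slack $\tfrac1d\sum_{v\in L}H(I_{N_G(v)}) - H(I_R) \ge 0$, the target becomes a bound $\sigma + \sum_{v\in L}\delta_v \ge \epsilon\,n$ for some $\epsilon=\epsilon(d)>0$; this gives $i(G) \le \big((2^{d+1}-1)\,2^{-2d\epsilon}\big)^{n/2d}$, i.e. the theorem with $D=(2^{d+1}-1)2^{-2d\epsilon}$.

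The qualitative part is standard: total slack zero forces $I_{N_G(v)}$ to have the extremal law at every $v$ and the neighbourhoods to be probabilistically decoupled, which is exactly the equality case of Kahn's inequality and characterizes the extremal graphs as disjoint unions of $K_{d,d}$. Hence for connected $G\ne K_{d,d}$ the total slack is strictly positive. The hard part --- and the crux of the whole theorem --- is to make this quantitative and uniform in $n$: strict positivity alone only gives a gap depending on $G$. The obstruction is that $\delta_v$ and $\sigma$ are global, being determined by the law of $I$ across all of $G$, so a local deviation from $K_{d,d}$ need not by itself produce a fixed amount of slack. I would attack this with a stability statement: show there is $\eta=\eta(d)>0$ such that if $\sigma + \sum_{v}\delta_v < \eta\,n$, then $I_{N_G(v)}$ is near-extremal for all but a small fraction of $v$ and the neighbourhoods are nearly independent, and then argue that this near-block structure is incompatible with $G$ being connected and larger than $K_{d,d}$. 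Equivalently one can run the argument through the occupancy fraction of the hard-core model, where strict concavity at the $K_{d,d}$ optimum furnishes the uniform gap after integrating in the fugacity. The finitely many graphs with $n$ below any fixed threshold are dispatched by a direct appeal to the strict form of Kahn's bound, and the resulting constants are combined.

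Finally, with $\epsilon(d)>0$ in hand I would set $D = (2^{d+1}-1)\,2^{-2d\epsilon(d)}$, note that it depends only on $d$ and satisfies $D < 2^{d+1}-1$, and conclude. The value of $D$ obtained this way is surely far from optimal, but the statement only asks for existence of such a constant.
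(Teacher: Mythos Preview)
Your setup of Kahn's entropy decomposition is correct, and you have correctly isolated the real difficulty: turning the qualitative strictness of Kahn's inequality for connected $G\ne K_{d,d}$ into a slack of order $\epsilon(d)\,n$, uniformly in $n$. But this is precisely the step you do not actually carry out. The ``stability statement'' you propose --- that small total slack forces a near-$K_{d,d}$ block structure incompatible with connectedness --- is asserted, not proved; the same goes for the alternative via occupancy fractions, which you only name. Since this is, in your own words, ``the crux of the whole theorem,'' the proposal as written is a plan rather than a proof.

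The paper avoids the stability route entirely by a direct, local argument that bounds \emph{every} summand $g_v$ below $c_d$ by a fixed amount. The combinatorial input is a simple structural lemma: in a connected $d$-regular bipartite graph that is not $K_{d,d}$, every vertex $v$ has two neighbours $u,w$ with $N_G(u)\ne N_G(w)$. Picking $t\in N_G(w)\setminus N_G(u)$ and comparing independent sets $S$ with $S\cap N_G(v)=\{u,w\}$ to $S\setminus\{w\}$ (and exhibiting ``extra'' sets containing $u$ and $t$), one gets
\[
\Pr\big(X_{N_G(v)}=\{u\}\big)-\Pr\big(X_{N_G(v)}=\{u,w\}\big)\ \ge\ 2^{-3d}.
\]
Hence the conditional law of $X_{N_G(v)}$ on $\{X_{N_G(v)}\ne\vec 0\}$ is bounded away from uniform on $2^d-1$ points, so its entropy is at most some $D_1<\log(2^d-1)$ depending only on $d$. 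Plugging $D_1$ in place of $\log(2^d-1)$ in the standard one-variable optimisation over $p_v$ yields a maximum strictly below $\log(2^{d+1}-1)$, again depending only on $d$, and this holds for \emph{every} $v\in L$. No global stability analysis, no separate treatment of small $n$, is needed.
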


\begin{lemma}\label{LemmaNoKDD}
    Пусть $G$ --- $d$-регулярный двудольный граф на $n$ вершинах. Пусть $v$ --- вершина $G$, лежащая в компоненте связности графа $G$ отличной от $K_{d, d}$. Тогда существуют вершины $u$ и $w$, такие, что $u, w \in N_G(v)$ и $N_G(u) \ne N_G(w)$.
\end{lemma}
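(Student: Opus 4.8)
The plan is to argue by contradiction, using $d$-regularity crucially (and implicitly assuming $d \ge 1$, since otherwise $N_G(v)$ is empty and the claim is vacuous). Suppose the conclusion fails, i.e. every pair of neighbors of $v$ has the same neighborhood: $N_G(u) = N_G(w)$ for all $u, w \in N_G(v)$. Since $G$ is $d$-regular, $v$ has exactly $d$ neighbors; write $U = N_G(v) = \braces{u_1, \dots, u_d}$ and let $W$ be their common neighborhood $N_G(u_1) = \dots = N_G(u_d)$. Because each $u_i$ is again $d$-regular, $\walls{W} = d$, and since $v \in N_G(u_i)$ we get $v \in W$. Bipartiteness places $U$ entirely on the side opposite $v$ and $W$ on the side of $v$, so $U \cap W = \emptyset$ and $\walls{U \cup W} = 2d$.

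First I would observe that $U \cup W$ induces a copy of $K_{d,d}$: by construction each $u_i$ is joined to every vertex of $W$, so the bipartite graph between $U$ and $W$ is complete, and as $\walls{U} = \walls{W} = d$ this is exactly $K_{d,d}$. The key step is then to show that $U \cup W$ is a full connected component, i.e. that no edge leaves it. Each $u_i$ already spends all $d$ of its edges reaching $W$, hence has no neighbor outside $W$; symmetrically, each $w \in W$ is adjacent to all $d$ vertices of $U$ by the definition of $W$, so it spends all its edges inside $U$ and has no neighbor outside $U$. Thus $U \cup W$ is closed under taking neighbors, so it is precisely the connected component of $v$, and that component is isomorphic to $K_{d,d}$ — contradicting the hypothesis that $v$ lies in a component different from $K_{d,d}$.

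The main obstacle is the double degree count: one must use $d$-regularity once to force $\walls{W} = d$ (so that the complete bipartite structure is \emph{all} of $K_{d,d}$ and nothing smaller sits inside it), and a second time on both sides to conclude that every vertex of $U \cup W$ has already exhausted its edges internally, leaving no room for edges to the rest of $G$. Once closedness is established, identifying the component with $K_{d,d}$ and reaching the contradiction is immediate.
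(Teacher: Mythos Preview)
Your argument is correct and is essentially the same as the paper's: assume all neighbors of $v$ share a common neighborhood, then use $d$-regularity twice to see that $N_G(v)$ together with this common neighborhood induces a $K_{d,d}$ which already exhausts all incident edges, hence is the whole component of $v$, contradicting the hypothesis. If anything, you are slightly more explicit than the paper in invoking bipartiteness to get $U\cap W=\emptyset$ and in spelling out $\walls{W}=d$.
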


\begin{proof}
    Обозначим $u$ --- произвольного соседа вершины $v$.
    
    Предположим, что для любых двух соседей вершины $v$ их множества соседей совпадают. Рассмотрим любую пару вершин $a \in N_G(v)$, $b \in N_G(u)$. Заметим, что они соединены, так как $N_G(u) = N_G(a)$ по предположению, а значит $b \in N_G(a)$.
    
    Заметим, что для каждой из вершин из $N_G(v)$ и $N_G(u)$ мы показали наличие $d$ ребер, выходящих из неё. Так как $G$ $d$-регулярен, они не имеют других ребер. Следовательно,  $N_G(v) \cup N_G(u)$ образует компоненту связности графа $G$ изоморфную $K_{d,d}$. Противоречие. Предположение не верно и лемма доказана.
    
\end{proof}

\begin{observation}\label{DmitrievMax}
    Пусть $f, g: [a, b] \to \setR $ --- две числовые функции, причем $f < g$ на $C \subset {[a, b]}$ и $\mymax_{[a, b]} f = \mymax_C f$. Тогда $\mymax_{[a,b]} f <\mymax_{[a,b]} g$.
\end{observation}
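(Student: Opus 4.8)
The plan is to pin down a single point of $[a,b]$ at which $f$ attains its global maximum and which \emph{also} belongs to $C$; at such a point the strict inequality $f < g$ does all the work. First I would set $M = \mymax_{[a,b]} f$ and rewrite the hypothesis $\mymax_{[a,b]} f = \mymax_C f$ as $M = \mymax_C f$. Since $C \subset [a,b]$, the maximum of $f$ over $C$ is attained at some $x_0 \in C$, and the displayed equality says $f(x_0) = M$, i.e. $x_0$ realizes the global maximum of $f$ while lying in $C$.

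Next, because $x_0 \in C$ and $f < g$ pointwise on $C$, I would apply the strict inequality at $x_0$ to obtain $f(x_0) < g(x_0)$. Finally, $x_0$ is a point of $[a,b]$, so $g(x_0) \le \mymax_{[a,b]} g$. Chaining the three relations yields $\mymax_{[a,b]} f = f(x_0) < g(x_0) \le \mymax_{[a,b]} g$, which is precisely the claim. The essential mechanism is that the hypothesis $\mymax_{[a,b]} f = \mymax_C f$ forces the global maximizer of $f$ to be taken inside $C$, exactly where $f$ sits strictly below $g$.

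The only delicate point — and really the sole place where the argument can stumble — is the tacit assumption that these maxima are \emph{attained}: one needs $\mymax_C f$ to be realized at an honest point of $C$ so that the strict bound $f < g$ is applicable there. This is implicitly granted by writing $\max$ rather than $\sup$, so I expect no further work; the existence of this maximizer, rather than any inequality manipulation, is the main (and essentially only) subtlety.
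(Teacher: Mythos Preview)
Your argument is correct and essentially identical to the paper's: pick a point $x_0\in C$ where $f$ attains $\mymax_C f$, then chain $\mymax_{[a,b]} f = \mymax_C f = f(x_0) < g(x_0) \le \mymax_{[a,b]} g$. Your remark that attainment of the maxima is tacitly assumed (via the use of $\max$ rather than $\sup$) is apt and matches how the paper proceeds without further comment.
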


\begin{proof}
    Пусть $x$ --- точка, в которой достигается максимум $f$ на $C$. Тогда:
    \[\mymax_{[a, b]} f = \mymax_C f = f(x) < g(x) \le  \mymax_{[a,b]} g.\]
\end{proof}

Для доказательства теоремы~\ref{DmitrievBipartite} нам потребуется понятие энтропии. Приведем здесь определения и несколько лемм, которые нам будут нужны. Подробнее ознакомиться с этим понятием (в том числе, ознакомиться с доказательствами лемм) можно, например, в~\cite{EntropyBook}.

\begin{definition-star}
    Пусть $p_1, p_2, \ldots, p_n$ --- распределение вероятностей, то есть $ \forall i: 0 \le p_i \le 1$ и $\mysum_{i = 1}^n p_i = 1$. Тогда энтропией $p_1, p_2, \ldots, p_n$ называется
    \[H(p_1, p_2, \ldots, p_n) = -\mysum_{i=1}^n p_i \log p_i, \]
    при этом, если $p_i = 0$, то слагаемое $p_i \log p_i$ полагается равным $0$.
\end{definition-star}

\begin{definition-star}
    Пусть $X$ --- случайная величина принимающая значения $x_1, x_2, \ldots, x_n$ с вероятностями $p_1, p_2, \ldots, p_n$ и только их. Тогда энтропией $H$ называется 
    \[H[X] = H(p_1, p_2, \ldots, p_n). \]
    Заметим, что добавление значений с нулевой вероятностью не меняет значения энтропии.
\end{definition-star}

\begin{definition-star}
    Пусть $X$ случайная величина, $\mathcal{A}$ --- событие. Тогда $H[X|\mathcal{A}]$ --- энтропия случайной величины с условным распределением $P_{X|\mathcal{A}}$.
\end{definition-star}

\begin{definition-star}
    Пусть $X, Y$ --- случайные величины, принимающие значения $x_1, x_2, \ldots, x_n$ и $y_1, y_2, \ldots, y_m$ соответственно. Тогда условной энтропией $X$ при условии $Y$ называется
    \[ H[X|Y] = -\mysum_{j=1}^m P(Y = y_j) H[X|Y = y_j] \]
\end{definition-star}
\begin{lemma}\label{EntropyLog}
    Пусть $p_1, p_2, \ldots, p_n$ --- распределение вероятностей. Тогда \[H(p_1, p_2, \ldots, p_n) \le \log n,\] при этом равенство достигается тогда и только тогда, когда~ $\forall i: p_i = \frac{1}{n}$.
\end{lemma}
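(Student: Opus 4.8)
The plan is to reduce the bound to the elementary inequality $\ln x \le x - 1$ (with equality only at $x = 1$), which lets me prove the inequality and extract the equality case in a single stroke. First I would restrict attention to the support $S = \condset{i}{p_i > 0}$, since by the stated convention the terms with $p_i = 0$ contribute nothing; this is routine bookkeeping, but it must be done carefully so that the equality analysis later is correct. Passing to natural logarithms via $\log x = \frac{\ln x}{\ln 2}$, I would consider the difference $\log n - H(p_1, \ldots, p_n)$ and rewrite it, using $\mysum_{i \in S} p_i = 1$, as $\frac{1}{\ln 2}\mysum_{i \in S} p_i \ln\pars{n p_i}$, equivalently $-\frac{1}{\ln 2}\mysum_{i \in S} p_i \ln \frac{1}{n p_i}$.

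Next I would apply $\ln x \le x - 1$ with $x = \frac{1}{n p_i}$ for each $i \in S$, obtaining
\[ \mysum_{i \in S} p_i \ln \frac{1}{n p_i} \le \mysum_{i \in S} p_i \pars{\frac{1}{n p_i} - 1} = \frac{\walls{S}}{n} - 1 \le 0, \]
where the last step uses $\walls{S} \le n$. Substituting back gives $\log n - H \ge 0$, that is, $H \le \log n$.

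For the equality case I would trace back both inequalities. Equality in $\ln x \le x - 1$ forces $\frac{1}{n p_i} = 1$, i.e. $p_i = \frac{1}{n}$, for every $i \in S$; equality in $\frac{\walls{S}}{n} \le 1$ forces $\walls{S} = n$, i.e. no $p_i$ vanishes. Together these yield $p_i = \frac{1}{n}$ for all $i$, and conversely the uniform distribution plainly attains $H = \log n$.

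I expect the main subtlety to be the joint handling of the two equality conditions rather than any deep difficulty: uniformity on the support alone is \emph{not} sufficient --- the support must also be all of $\braces{1, \ldots, n}$ --- so both inequalities have to be tight simultaneously, and the convention on $p_i = 0$ terms must be invoked consistently throughout. An alternative route would be Jensen's inequality applied to the concave function $\log$ at the points $\frac{1}{p_i}$, but the $\ln x \le x - 1$ argument keeps the equality bookkeeping most transparent.
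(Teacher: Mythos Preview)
Your argument is correct. Note, however, that the paper does not supply its own proof of this lemma: it is one of several standard entropy facts that the authors state without proof, referring the reader to Cover and Thomas~\cite{EntropyBook}. Your approach via $\ln x \le x - 1$ is in fact one of the standard textbook routes (the cited reference handles it essentially this way, either directly or through the equivalent log-sum inequality / Jensen's inequality), so there is nothing to compare against in the paper itself.

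One small remark on your equality analysis: once the termwise application of $\ln x \le x-1$ is tight you already have $p_i = \frac{1}{n}$ for every $i \in S$, and since $\mysum_{i\in S} p_i = 1$ this forces $\walls{S} = n$ automatically. Hence the second inequality $\frac{\walls{S}}{n} \le 1$ is tight as a consequence, and need not be verified separately; your bookkeeping is slightly redundant, though certainly not incorrect.
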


\begin{lemma}\label{CondEntropy}
    \[H[(X, Y)] = H[X] + H[Y \mid X]. \]
\end{lemma}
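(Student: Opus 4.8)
The final statement to prove is Lemma \ref{CondEntropy} (the chain rule for entropy):
$$H[(X,Y)] = H[X] + H[Y \mid X].$$

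Let me think about how to prove this.The plan is to unfold all three quantities in terms of the underlying joint and marginal probabilities and then collect terms. Write $p_i = P(X = x_i)$ and $q_{ij} = P(X = x_i,\, Y = y_j)$, so that $P(Y = y_j \mid X = x_i) = q_{ij}/p_i$ whenever $p_i > 0$. By the definitions above, $H[(X,Y)] = -\sum_{i,j} q_{ij} \log q_{ij}$, $H[X] = -\sum_i p_i \log p_i$, and $H[Y \mid X] = \sum_i p_i \, H[Y \mid X = x_i]$.

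First I would expand the conditional term. For each index $i$ with $p_i > 0$ we have
\[ H[Y \mid X = x_i] = -\sum_j \frac{q_{ij}}{p_i} \log \frac{q_{ij}}{p_i}, \]
so that $p_i \, H[Y \mid X = x_i] = -\sum_j q_{ij}\pars{\log q_{ij} - \log p_i}$, using $\log(q_{ij}/p_i) = \log q_{ij} - \log p_i$. Summing over $i$ gives
\[ H[Y \mid X] = -\sum_{i,j} q_{ij} \log q_{ij} + \sum_{i,j} q_{ij} \log p_i. \]

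Next I would identify the two pieces. The first sum is exactly $H[(X,Y)]$. In the second, summing out $j$ first and using $\sum_j q_{ij} = p_i$ turns it into $\sum_i p_i \log p_i = -H[X]$. Substituting, $H[Y \mid X] = H[(X,Y)] - H[X]$, which rearranges to the asserted chain rule.

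The only delicate point is the degenerate indices. When $p_i = 0$ the conditional distribution $P_{Y \mid X = x_i}$ is undefined, but then $q_{ij} = 0$ for every $j$; the convention $0 \log 0 = 0$ makes all affected summands vanish, and the (undefined) value $H[Y \mid X = x_i]$ never matters since it is weighted by $p_i = 0$. Thus every sum above may be restricted to those $i$ with $p_i > 0$ without changing any value, and the computation goes through verbatim. I expect this bookkeeping of zero-probability atoms to be the only real obstacle; the algebraic heart of the argument is the single identity $\log(q_{ij}/p_i) = \log q_{ij} - \log p_i$.
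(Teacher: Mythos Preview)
Your proof is correct: expanding $H[Y\mid X]$ via the definitions, using $\log(q_{ij}/p_i)=\log q_{ij}-\log p_i$, and then collecting terms is the standard derivation of the chain rule, and your handling of the $p_i=0$ case via the $0\log 0=0$ convention is exactly right. The paper itself does not prove this lemma at all --- it simply states it and refers the reader to \cite{EntropyBook} for proofs of the entropy lemmas --- so there is nothing to compare against; your argument is essentially the textbook one.
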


\begin{lemma}\label{EntropyVector}
    Энтропия векторной величины не превосходит суммарной энтропии её компонент. Другими словами,
    
    \[H(X_1, X_2, \ldots, X_n) \le \mysum_{i = 1}^n H[X_i].\]
\end{lemma}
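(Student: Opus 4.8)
The plan is to combine the chain rule (Лемма~\ref{CondEntropy}) with the principle that conditioning never increases entropy. First I would extend the two-variable chain rule to $n$ variables by induction. Writing $X_{<i}$ for the vector $(X_1, \ldots, X_{i-1})$ and viewing it as a single (vector-valued) random variable, Лемма~\ref{CondEntropy} applied with $X := X_{<i}$ and $Y := X_i$ gives $H(X_1,\ldots,X_i) = H[(X_{<i}, X_i)] = H[X_{<i}] + H[X_i \mid X_{<i}]$, so unwinding the recursion yields
\[H(X_1, \ldots, X_n) = \sum_{i=1}^n H[X_i \mid X_{<i}].\]

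The second ingredient is the inequality $H[X \mid Y] \le H[X]$, valid for arbitrary random variables $X$ and $Y$ (where $Y$ may be vector-valued, in which case it simply takes finitely many tuple values, so the definition of conditional entropy applies verbatim). Granting this, each summand above satisfies $H[X_i \mid X_{<i}] \le H[X_i]$, and summing over $i$ produces exactly $H(X_1, \ldots, X_n) \le \sum_{i=1}^n H[X_i]$, as required.

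The main obstacle is therefore establishing that conditioning reduces entropy, as this is not among the lemmas quoted above. I would derive it from the concavity of the entropy functional. Let $q$ denote the distribution of $X$ and $q^{(y)}$ the conditional distribution $P_{X \mid Y = y}$; then the marginal is the convex combination $q = \sum_y P(Y = y)\, q^{(y)}$, whereas by definition $H[X \mid Y] = \sum_y P(Y = y)\, H(q^{(y)})$. Since the map $p \mapsto -\sum_i p_i \log p_i$ is concave on the probability simplex --- a consequence of the concavity of $t \mapsto -t\log t$, which is the same Jensen-type fact underpinning Лемма~\ref{EntropyLog} --- Jensen's inequality gives
\[H[X] = H(q) = H\!\left(\sum_y P(Y=y)\, q^{(y)}\right) \ge \sum_y P(Y=y)\, H(q^{(y)}) = H[X \mid Y].\]
The one point demanding care is the concavity of $H$, which is immediate once one knows $t \mapsto -t\log t$ is concave on $[0,1]$; the rest is bookkeeping with the definitions.
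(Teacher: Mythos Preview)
Your argument is correct and is the standard textbook proof: chain rule followed by the bound $H[X_i \mid X_{<i}] \le H[X_i]$ (conditioning does not increase entropy), the latter justified via concavity of $H$. One minor remark: the inequality $H[X\mid Y]\le H[X]$ can also be read off directly from the third item of Лемма~\ref{EntropyDepend} by taking $Y$ to be a constant function of $X_{<i}$, so you need not invoke concavity separately.

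As for comparison with the paper: the paper does not supply its own proof of this lemma. It is one of several entropy facts stated with the blanket remark that proofs can be found in~\cite{EntropyBook} (Cover--Thomas). Your proof is exactly the argument given there, so there is nothing to contrast.
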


\begin{lemma}[{\cite[Lemma 1]{Radha}}]\label{EntropyWierd}
     Пусть $(X_1, X_2, \ldots, X_n)$ --- векторная случайная величина.
     
     Для множества $S \subset \left\{1, 2, \ldots n \right\}$ обозначим $X_S = \condset{X_i}{i \in S}$.
     
     Пусть множества $S_1, S_2, \ldots S_m$ таковы, что каждое число из $\left\{1, 2, \ldots n \right\}$ входит в хотя бы $k$ из этих множеств. Тогда
    \[ \mysum_{i = 1}^m H[X_{S_i}] \ge k H[X]. \]
\end{lemma}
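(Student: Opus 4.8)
The plan is to prove this as an instance of Shearer's entropy inequality, built from the chain rule of Lemma~\ref{CondEntropy} and the principle that conditioning never increases entropy. Fix the natural order on $\{1,\ldots,n\}$. Iterating the two-variable chain rule (grouping all but the last coordinate into a single vector variable and peeling off the last) gives the full expansion
\[ H[X] = \mysum_{j=1}^n H[X_j \mid X_{\{1,\ldots,j-1\}}], \]
and the same iterated chain rule applied to each $X_{S_i}$, with the elements of $S_i$ taken in increasing order, gives
\[ H[X_{S_i}] = \mysum_{j \in S_i} H[X_j \mid X_{S_i \cap \{1,\ldots,j-1\}}]. \]

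The key step is a monotonicity property: enlarging the conditioning set cannot increase conditional entropy, i.e. $H[X_j \mid X_A] \ge H[X_j \mid X_B]$ whenever $A \subseteq B$. I would obtain the unconditional case $H[Y \mid X] \le H[Y]$ at once by combining subadditivity (Lemma~\ref{EntropyVector}, for two components) with the chain rule (Lemma~\ref{CondEntropy}); the conditional case then follows by applying this fibrewise on each event $\{X_A = a\}$ and averaging over $a$ with weights $P(X_A = a)$, which is exactly the defining formula for conditional entropy. Since $S_i \cap \{1,\ldots,j-1\} \subseteq \{1,\ldots,j-1\}$, this gives, for every $i$ and every $j \in S_i$,
\[ H[X_j \mid X_{S_i \cap \{1,\ldots,j-1\}}] \ge H[X_j \mid X_{\{1,\ldots,j-1\}}]. \]

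Summing the expansion of $H[X_{S_i}]$ over $i$ and interchanging the order of summation, I would collect for each fixed $j$ the multiplicity $\walls{\{i : j \in S_i\}}$, obtaining
\[ \mysum_{i=1}^m H[X_{S_i}] \ge \mysum_{j=1}^n \walls{\{i : j \in S_i\}} \cdot H[X_j \mid X_{\{1,\ldots,j-1\}}]. \]
Because each index lies in at least $k$ of the sets and each conditional entropy is non-negative, every coefficient may be lowered to $k$; the right-hand side then equals $k\,H[X]$ by the chain-rule identity above, which is the claim. The one genuine obstacle is the monotonicity-of-conditioning step: it is the sole ingredient not already written out among the listed lemmas, and it has to be deduced carefully in its conditional form from subadditivity and the chain rule — everything else is a fixed ordering plus a double-counting rearrangement.
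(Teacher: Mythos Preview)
Your argument is correct: this is exactly the standard proof of Shearer's inequality --- chain rule in a fixed linear order, then drop extra conditioning, then count multiplicities. The paper itself does not supply a proof of this lemma; it is quoted verbatim from \cite{Radha} (and the surrounding entropy lemmas are referred to \cite{EntropyBook}), so there is nothing in-text to compare your approach against. What you wrote is precisely the argument one finds in those references.

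One small remark on your ``one genuine obstacle'': the monotonicity-of-conditioning step you single out is in fact already available among the listed lemmas. If $A \subseteq B$ then $X_A$ is a function of $X_B$ (coordinate projection), so the third item of Lemma~\ref{EntropyDepend} gives $H[X_j \mid X_B] \le H[X_j \mid X_A]$ directly, with no need to rederive it from subadditivity and a fibrewise argument. Using that shortcut, every ingredient in your proof is a black-box invocation of Lemmas~\ref{CondEntropy}, \ref{EntropyVector}, and~\ref{EntropyDepend}.
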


\begin{lemma}\label{EntropyDepend}
    Пусть $X, Y$ --- дискретные случайные величины, $f$ --- произвольная функция и $Y=f(X)$.
    Тогда:
    \begin{itemize}
    \item $H[Y \mid X] = 0$;
    \item $H[(X, Y)] = H[X]$;
    \item Для любой случайной величины $Z$ выполнено: $H[Z \mid X] \le H[Z \mid Y] $.
    \end{itemize}
\end{lemma}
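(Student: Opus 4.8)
The plan is to treat the three claims in order, using the chain rule (Lemma~\ref{CondEntropy}) and subadditivity (Lemma~\ref{EntropyVector}) as the only external inputs. For the first claim $H[Y\mid X]=0$: since $Y=f(X)$, conditioning on any value $X=x$ with $P(X=x)>0$ forces $Y$ to take the single value $f(x)$ with probability $1$. Thus the conditional distribution $P_{Y\mid X=x}$ is a point mass, whose entropy is $0$, so $H[Y\mid X=x]=0$ for every such $x$, and averaging over $x$ according to the definition of conditional entropy gives $H[Y\mid X]=0$.

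The second claim $H[(X,Y)]=H[X]$ then follows immediately from the chain rule: $H[(X,Y)]=H[X]+H[Y\mid X]=H[X]+0$, invoking the first part.

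The third claim $H[Z\mid X]\le H[Z\mid Y]$ is the main obstacle, because the excerpt supplies neither the inequality \emph{conditioning reduces entropy} nor a conditional form of the chain rule; both must be bootstrapped. I would proceed in three steps. First, establish the basic inequality $H[A\mid B]\le H[A]$: the chain rule gives $H[(A,B)]=H[B]+H[A\mid B]$, while subadditivity (Lemma~\ref{EntropyVector}) gives $H[(A,B)]\le H[A]+H[B]$, so subtracting $H[B]$ yields the claim. Second, show $H[Z\mid X]=H[Z\mid (X,Y)]$: since $Y=f(X)$, the event $\{X=x\}$ coincides with $\{X=x,\,Y=f(x)\}$ and every event $\{X=x,\,Y=y\}$ with $y\ne f(x)$ is null, so the two averages defining these conditional entropies agree on their nonzero terms. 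Third, prove $H[Z\mid (X,Y)]\le H[Z\mid Y]$ by freezing the value of $Y$: writing both sides as averages over $y$ of the entropies computed inside the space conditioned on $\{Y=y\}$, the basic inequality from the first step, applied within each $\{Y=y\}$ world (with roles $A=Z$, $B=X$), gives $H[Z\mid X,\,Y=y]\le H[Z\mid Y=y]$, and averaging against $P(Y=y)$ preserves the inequality. Chaining the second and third steps yields $H[Z\mid X]=H[Z\mid(X,Y)]\le H[Z\mid Y]$.

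The only delicate point is making the third step rigorous: one must verify that $H[Z\mid(X,Y)]$ genuinely decomposes as $\mysum_y P(Y=y)\,H[Z\mid X,\,Y=y]$, where the inner quantity is the conditional entropy of $Z$ given $X$ computed inside the probability space conditioned on $\{Y=y\}$, so that the unconditional inequality may legitimately be invoked there. Everything else is routine bookkeeping with the definitions.
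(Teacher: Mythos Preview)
Your argument is correct. Note, however, that the paper does not supply its own proof of this lemma: it is one of the standard entropy facts that are merely stated, with the reader referred to \cite{EntropyBook} for proofs. So there is no ``paper's proof'' to compare against.

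Your derivation is the standard textbook one and is carried out cleanly using only the tools the paper has already recorded (Lemmas~\ref{CondEntropy} and~\ref{EntropyVector}). The first two items are immediate; for the third, your three-step route --- first deriving $H[A\mid B]\le H[A]$ from the chain rule plus subadditivity, then identifying $H[Z\mid X]$ with $H[Z\mid(X,Y)]$, and finally applying the first step inside each slice $\{Y=y\}$ --- is exactly the usual argument. The ``delicate point'' you flag is genuine but routine: the identity $H[Z\mid(X,Y)]=\mysum_y P(Y=y)\,H_{\{Y=y\}}[Z\mid X]$ follows by grouping the double sum over $(x,y)$ by $y$ and factoring $P(X=x,Y=y)=P(Y=y)P(X=x\mid Y=y)$. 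One cosmetic remark: the paper's displayed definition of conditional entropy carries a stray minus sign; your argument (correctly) uses the standard nonnegative version.
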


\begin{lemma}\label{DmitrievEntropy}
    Для любых $n \ge 2, \eps > 0$ существует число $D_{n, \eps} < \log n$, такое что для любой случайной величины $X$, принимающей $n \ge 2$ различных значений с вероятностями $p_1, p_2, \ldots, p_n$, такими что $p_1 - p_2 \ge \eps > 0$ выполнено:
    \[
     H[X] \le D_{n, \eps}.
    \]
\end{lemma}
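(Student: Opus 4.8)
The plan is to exploit the \emph{equality case} of Lemma~\ref{EntropyLog}: the bound $H \le \log n$ is attained \emph{only} at the uniform distribution, while the hypothesis $p_1 - p_2 \ge \eps > 0$ keeps us a fixed distance away from that distribution. A compactness argument then promotes the pointwise strict inequality ``$H < \log n$'' into a \emph{uniform} gap, giving a single constant $D_{n,\eps} < \log n$.

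Concretely, I would introduce the set of admissible probability vectors
\[ S = \condset{(p_1, \ldots, p_n) \in \setR^n}{p_i \ge 0 \ \forall i, \ \mysum_{i=1}^n p_i = 1, \ p_1 - p_2 \ge \eps}. \]
If $\eps > 1$ then $S = \emptyset$ (since $p_1 - p_2 \le p_1 \le 1$), no random variable satisfies the hypothesis, and the claim holds vacuously with, say, $D_{n,\eps} = 0 < \log n$. So assume $S \neq \emptyset$. The set $S$ is the intersection of the standard simplex with finitely many closed half-spaces, hence closed and bounded, i.e.\ \emph{compact}; and the entropy $H(p_1, \ldots, p_n) = -\mysum_{i=1}^n p_i \log p_i$ is continuous on the simplex (with the usual convention $0 \log 0 = 0$). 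Therefore $H$ attains a maximum on $S$ at some point $\mathbf{p}^{\ast} = (p_1^{\ast}, \ldots, p_n^{\ast}) \in S$; I set $D_{n,\eps} = \mymax_{S} H = H(\mathbf{p}^{\ast})$.

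It remains to check $D_{n,\eps} < \log n$. The uniform vector $\pars{\tfrac1n, \ldots, \tfrac1n}$ has $p_1 - p_2 = 0 < \eps$, so it does \emph{not} lie in $S$; in particular $\mathbf{p}^{\ast}$ is not the uniform distribution. By the equality clause of Lemma~\ref{EntropyLog}, $H(\mathbf{p}) = \log n$ forces $\mathbf{p}$ to be uniform, so $H(\mathbf{p}^{\ast}) < \log n$, i.e.\ $D_{n,\eps} < \log n$. Finally, any $X$ as in the statement has a probability vector lying in $S$ (its entries are positive and satisfy all three constraints), whence $H[X] \le \mymax_{S} H = D_{n,\eps}$, as required.

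I expect the only delicate points — rather than a genuine obstacle — to be two bookkeeping matters. First, one must pass to the \emph{closed} set $S$, allowing $p_i = 0$, so that the maximum is actually attained, even though the random variables of interest have strictly positive probabilities; since we only seek an upper bound, maximizing over this larger set is harmless. Second, one must correctly invoke the equality condition of Lemma~\ref{EntropyLog} to convert the strict inequality at the maximizer $\mathbf{p}^{\ast}$ into the strict bound on the constant itself. The degenerate regime $\eps > 1$ (and, to be scrupulous, nonemptiness of $S$ in the relevant small-$\eps$ range) should be disposed of explicitly at the outset.
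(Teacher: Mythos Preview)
Your proof is correct and follows the same compactness-plus-equality-case argument as the paper: maximize the continuous entropy function over a compact constraint set that excludes the uniform distribution, then invoke the strict inequality in Lemma~\ref{EntropyLog}. The paper takes a slightly more roundabout path --- first observing that $p_1 - p_2 \ge \eps$ forces either $p_1 \ge \tfrac{1}{n} + \tfrac{\eps}{2}$ or $p_2 \le \tfrac{1}{n} - \tfrac{\eps}{2}$ and then maximizing over each of those two half-spaces separately --- but your direct choice $S = \{\,p : p_1 - p_2 \ge \eps\,\}$ is cleaner and entirely equivalent.
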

\begin{proof}
    Так как $p_1 - p_2 \ge \eps$, выполнено хотя бы одно из следующих условий:
    \begin{itemize}
    \item $p_1 \ge \frac{1}{n} + \frac{\eps}{2}$;
    \item $p_2 \le \frac{1}{n} - \frac{\eps}{2}$.
    \end{itemize}
    Положим $D_{n, \eps} = \max \pars{
        \mymax_{0 \le q_i \le 1, q_1 \ge\frac{1}{n} + \frac{\eps}{2}} H(q_1, q_2, \ldots, q_n),
        \mymax_{0 \le q_i \le 1, q_2 \le\frac{1}{n} - \frac{\eps}{2}} H(q_1, q_2, \ldots, q_n)
    }$.
    
    Оба внутренних максимума существуют, как максимумы непрерывной функции $H$ на компакте. Оба из них меньше, чем $\log n$ по лемме~\ref{EntropyLog}.
\end{proof}

\begin{proof}[Доказательство теоремы ~\ref{DmitrievBipartite}]
    Доказательство повторяет доказательство Кана из \cite{Kahn}.
    
    Обозначим через $v_1, v_2, \ldots, v_n$ вершины $G$. Обозначим $L$, $R$ --- множества вершин каждой из долей.
  
    Пусть независимое множество $X$ выбирается равновероятно из $\setI(G)$. Будем отождествлять $X$ с его индикаторным вектором, то есть вектором $(X_1, X_2, \ldots, X_n)$, где $X_j = I\braces{v_j \in G}$.
    
    По лемме~\ref{EntropyLog}, $H[X] = \log i(G)$, с другой стороны, по лемме~\ref{CondEntropy}, $H[X] = H[(X_L, X_R)] = H[X_R] + H[X_L \mid X_R]$.
    
    Рассмотрим множества $N(v), v \in L$. Каждая вершина правой доли входит ровно в $d$ из этих множеств. Тогда по лемме~\ref{EntropyWierd} \[\mysum_{v \in L} H[X_{N(V)}] \ge H[X_R].\]
    
    По лемме~\ref{EntropyVector} имеем $H[X_L \mid X_R ] \le \mysum_{v \in L} H [ X_v \mid X_R ]$. \\
    По лемме~\ref{EntropyDepend},$H[X_v \mid X_R] \le H[X_v \mid N_G(v)]$, так как $N_G(v)$ однозначно определяется по $X_R$.
   
    Таким образом, нами уже получено:
    \[ 
    \log i(G) = H[x] \le \frac{1}{d} \mysum_{v \in L}H[X_{N_G(v)}] + \mysum_{v \in L} H[X_v \mid X_R] \le 
    \mysum_{v \in L} \pars{\frac{1}{d}H[X_{N_G(v)}] + H[X_v \mid N_G(v)]}
    .\]

    Введем для каждой вершины $v$ случайную величину $Y_v$, где $Y_v = 1$, если ни один сосед вершины $v$ не лежит в $X$, $Y_v = 0$ иначе. Обозначим также $p_v = P(Y_v = 1)$. Тогда если $X_{N_G(v)} \ne \vec{0}$, то $v$ не лежит в $X$(так как есть её соседи в $X$), а значит, используя лемму~\ref{EntropyLog} и факт, что $X_v$ принимает только 2 значения, получаем:
    
    \[
    \begin{array}{rl}
    H[X_v \mid X_{N_G(v)}] & = P(X_{N_G(v)} = \vec{0}) H[X_v \mid X_{N_G(v)} = \vec{0}] + P(X_{N_G(v)} \ne \vec{0}) H[X_v \mid X_{N_G(v)} \ne \vec{0}] =
    \\ & = p_v H[X_v \mid Y_v=1] \le p_v \log 2 = p_v .
    \end{array}\]
    
    Поскольку $Y_v$ это функция от $X_{N_G(v)}$:
    \[
    \begin{array}{rl}
    H[X_{N_G(v)}] = &H[(X_{N_G(v)}, Y_v)] = H[Y_v] + H[X_{N_G(v)} \mid Y_v] =\\= &H(p_v, 1 - p_v) + (1 - p_v) H[X_{N_G(v)} \mid X_{N_G(v)} \ne \vec{0}] + p_v [X_{N_G(v)} \mid X_{N_G(v)}=\vec{0}] = \\ = &H(p_v, 1 - p_v) + (1 - p_v) H[X_{N_G(v)} \mid X_{N_G(v)} \ne \vec{0}]
    .\end{array}\]
    
    Оценим теперь $H[X_{N_G(v)} \mid X_{N_G(v)} \ne \vec{0}]$. Рассмотрим вершины $u$, $w$, полученные из леммы~\ref{LemmaNoKDD} для вершины $v$. Их множества соседей не совпадают, поэтому существует такая вершина $t$, что она соединена с $w$, но не с $u$. Хотим оценить разницу вероятностей $P(X_{N_G(v) = \braces{u}}) - P(X_{N_G(v) = \braces{u, w}})$.
    Каждому независимому множеству $S$ такому, что $S \cap N_G(v) = \braces{u, w}$ сопоставим независимое множество $S \setminus \braces{w}$ для которого верно $S \setminus \braces{w} \cap N_G(v) = \braces{u}$, при этом разным $S$ сопоставлены разные независимые множества. Однако, существуют независимые множества $T$, для которых $T \cap N_G(v) = \braces{u}$, но при этом нет множества $S$, которому они сопоставлены. Назовём такие множества $T$ <<плохими>>.
    
    Оценим долю <<плохих>> множеств. Обозначим $U = N_G(v) \cup N_G(u) \cup N_G(t)$. Рассмотрим произвольное независимое множество $S$ в $G[V(G) \setminus U]$. По нему можно построить плохое независимое множество $S \cup \braces{u, t}$. Оно является независимым множеством так как $S \cap (N_G(u) \cup N_G(t)) = \emptyset$ и является плохим, так как содержит $u, t$ и $S \cap N_G(v) = \braces{u}$ в силу того, что $S \cap N_G(v) = \emptyset$. Легко заметить, что всего существует не более, чем $2^{\class{U}} \le 2^{3d}$ независимых множеств в $G$ таких, что их пересечение с $V(G) \setminus U$ равно $S$  . Таким образом мы получили, что при фикисрованном $S$ доля <<плохих>> множеств не превосходит $2^{-3d}$, а значит и общая доля <<плохих>> множеств от всех независимых множеств в $G$ не превосходит $2^{-3d}$.
    
    Рассмотрим $H[X_{N_G(v)} \mid X_{N_G(v)} \ne \vec{0}]$. Ясно, что $X_{N_G(v)}$ при указанном условии принимает не более $2^d - 1$ различных значений, при этом $P(X_{N_G(v) = \braces{u}}) - P(X_{N_G(v) = \braces{u, w}}) \ge 2^{-3d}$. Таким образом по лемме~\ref{DmitrievEntropy}, $H[X_{N_G(v)} \mid X_{N_G(v)} \ne \vec{0}] \le D_{2^d-1, 2^{-3d}} =: D_1$, где константа зависит только от $d$.
    
    Продолжим цепочку неравенств:
    \[
    \begin{array}{rl}
     \log i(G) &  \le \mysum_{v \in L} \pars{\frac{1}{d}H[X_{N_G(v)}] + H[X_v \mid N_G(v)]} \\
     & \le \mysum_{v \in L} \pars{\frac{1}{d}\pars{H(p_v, 1 - p_v) + (1 - p_v)D_1} + p_v} \\
     & \le \frac{n}{2} \mymax_{0 \le p \le 1}  \pars{\frac{1}{d}\pars{H(p, 1 - p) + (1 - p)D_1} + p} \\
     
     & \le \frac{n}{2d} \mymax_{0 \le p \le 1}  \pars{H(p, 1 - p) + (1 - p)  D_1 + pd}
    .\end{array} 
    \]
    Обозначим $f(p) = H(p, 1 - p) + (1 - p)  D_1 + pd, D_2 = \mymax_{0 \le p \le 1} f(p)$.
    
    Заметим, что $f(p) < H(p, 1 - p) + (1 - p)  D_1 + pd$ на $[0; 1)$ так как $D_1 < \log \pars{2^{d}-1}$. \\
    Заметим, что $f(1) = d < D_2$ так как из обратного следовало бы, что $i(G) \le 2^{\frac{n}{2}}$, что, очевидно, неверно.
    Тогда, согласно наблюдению~\ref{DmitrievMax}: 
    
    \[D_2 < \mymax_{0 \le p \le 1}  \pars{H(p, 1 - p) + (1 - p)   \log \pars{2^d-1}+ pd}.\]
    
    Согласно \cite{Kahn}, значение данного максимума равно $\log (2^{d + 1} - 1)$.
    
    Таким образом, положив $D = D_2$ получаем условие теоремы.
    
\end{proof}

Заметим, что условие двудольности на самом деле можно отбросить:
\begin{corollary}\label{DmitrievAny}
    Для любого целого положительного $d$ существует такая константа $D$ (зависящая только от $d$), что для любого $d$-регулярного связного графа $G$ на $n$ вершинах, не изоморфного $K_{d,d}$ выполнено:
    \[i(G) \le D ^ \frac{n}{2 d},\]
    при этом $D < 2^{d + 1} - 1$.
\end{corollary}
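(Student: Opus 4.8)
The plan is to reduce everything to the bipartite case already handled in Theorem~\ref{DmitrievBipartite}, using the \emph{same} constant $D < 2^{d+1}-1$ that theorem provides. I would split according to whether $G$ is bipartite. If $G$ is bipartite, then it is a connected $d$-regular graph not isomorphic to $K_{d,d}$, so Theorem~\ref{DmitrievBipartite} applies verbatim and yields $i(G) \le D^{n/2d}$ with $D < 2^{d+1}-1$. Nothing further is needed in this case.

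If $G$ is non-bipartite, I would pass to the bipartite double cover $G \times K_2$ and record three structural facts. First, $G \times K_2$ is bipartite and $d$-regular on $2n$ vertices (tensoring a $d$-regular graph with $K_2$ keeps regularity and forces bipartiteness). Second, since $G$ is connected \emph{and} non-bipartite, $G \times K_2$ is connected; this is the single external graph-theoretic fact the argument leans on. Third, $G \times K_2 \not\cong K_{d,d}$ for the trivial reason that it has $2n$ vertices while $K_{d,d}$ has $2d$, and a simple $d$-regular graph satisfies $n \ge d+1 > d$. Consequently Theorem~\ref{DmitrievBipartite} applies to $G \times K_2$ and gives $i(G \times K_2) \le D^{(2n)/(2d)} = D^{n/d}$ with the same $D < 2^{d+1}-1$.

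It then remains to compare $i(G)$ with $i(G \times K_2)$. Setting $\lambda = 1$ in the identity established inside the proof of Theorem~\ref{DmitrievNotBipartite}, namely $P(\lambda, G \times K_2) = P(\lambda, G)^2 + T$ with $T \ge 0$, and using $P(1,\cdot) = i(\cdot)$, I obtain $i(G)^2 = i(G \times K_2) - T \le i(G \times K_2) \le D^{n/d}$, hence $i(G) \le D^{n/2d}$. Since the very same constant $D$ from Theorem~\ref{DmitrievBipartite} serves in both cases, this $D < 2^{d+1}-1$ works uniformly for all connected $d$-regular $G \not\cong K_{d,d}$, proving the corollary.

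The only genuinely non-routine ingredient is the connectedness of $G \times K_2$ when $G$ is connected and non-bipartite; the rest is bookkeeping. I would emphasize why the double cover is essential rather than the direct route: applying the non-bipartite corollary of Section~2 (with $\lambda=1$) only gives $i(G) < (2^{d+1}-1)^{n/2d}$, i.e. a strict inequality whose constant is still $2^{d+1}-1$ and need not be bounded away from it (the count $C$ of non-bipartite induced subgraphs can be as small as $1$, e.g. for an odd cycle), so it does not deliver a \emph{uniform} $D < 2^{d+1}-1$. Routing through Theorem~\ref{DmitrievBipartite} on $G \times K_2$ is precisely what supplies such a uniform gap.
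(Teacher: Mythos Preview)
Your proof is correct and follows essentially the same route as the paper: split into bipartite/non-bipartite, and in the non-bipartite case pass to the bipartite double cover $G\times K_2$, check it satisfies the hypotheses of Theorem~\ref{DmitrievBipartite}, and combine with $i(G)^2\le i(G\times K_2)$. The only cosmetic difference is how $G\times K_2\not\cong K_{d,d}$ is verified: you use the vertex count $2n\ge 2(d+1)>2d$, while the paper exhibits two non-adjacent vertices $v_1,v_2$ in opposite parts of the bipartition; both arguments are valid and yours is arguably the quicker one.
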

\begin{proof}
    Зафиксируем $d$. Обозначим через $D$ константу, полученную из теоремы~\ref{DmitrievBipartite}.
    
    Рассмотрим произвольный $d$-регулярный связный граф $G$, не изоморфный $K_{d, d}$. Возможны три случая:
    \begin{itemize}
    \item $G$ --- двудольный. Тогда $i(G) \le D ^ \frac{n}{2 d}$ по теореме~\ref{DmitrievBipartite}.
    \item $G$ --- недвудольный. Тогда из теоремы~\ref{DmitrievNotBipartite} получаем, что $i(G) ^ 2 < i(G \times K_2)$. При этом, $G \times K_2$ двудолен. $G$ недвудолен, следовательно существует вершина $v$ в $G$ и путь из $v$ в $v$ нечетной длины. Это значит, что между вершинами $v_1$ и $v_2$ графа $G \times K_2$, соответствующих вершине $v$ графа $G$ существует путь нечетной длины. Таким образом, вершины $v_1$ и $v_2$ лежат в одной компоненте связности, в разных долях графа $G \times K_2$ и при этом не соединены ребром, а значит $G \times K_2$ не является графом Алона. Воспользовавшись теоремой~\ref{DmitrievBipartite}, получаем:
    
    \[ i(G) < \sqrt{i(G \times K_2)} < \sqrt{D^\frac{2n}{2d}} = D^\frac{n}{2d}.\]
    \end{itemize}
    В обоих случаях получили необходимое утверждение.
\end{proof}

\begin{corollary}\label{coroUniqueness}
    Для любого целого положительного $d$ существует такая константа $D < 1$ (зависящая только от $d$), что для любого $d$-регулярного графа $G$ на $n$ вершинах, не являющегося графом Алона выполнено:
    \[i(G) \le D (2^{d + 1} - 1)^ \frac{n}{2 d}.\]
\end{corollary}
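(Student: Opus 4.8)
The plan is to reduce the statement to the per-component bound already established in Corollary~\ref{DmitrievAny} by exploiting the multiplicativity of $i(\cdot)$ over connected components. First I would decompose $G$ into its connected components $C_1, \dots, C_m$, writing $n_j = \walls{V(C_j)}$ with $\mysum_{j} n_j = n$. Since an independent set of a disjoint union is obtained by choosing an independent set in each component independently, $i(G) = \myproduct_{j=1}^m i(C_j)$. Each $C_j$ is itself connected and $d$-regular (regularity is inherited by components), so for each $j$ exactly one of two cases occurs: either $C_j \cong K_{d,d}$, whence $n_j = 2d$ and $i(C_j) = 2^{d+1}-1 = (2^{d+1}-1)^{n_j/(2d)}$; or $C_j \not\cong K_{d,d}$, whence Corollary~\ref{DmitrievAny} gives $i(C_j) \le D_0^{\,n_j/(2d)}$, where $D_0 < 2^{d+1}-1$ is the constant furnished by that corollary.

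Next I would factor out the target base. Writing $\alpha = D_0/(2^{d+1}-1)$, so that $0 < \alpha < 1$ depends only on $d$, the bound for a non-$K_{d,d}$ component reads $i(C_j) \le (2^{d+1}-1)^{n_j/(2d)} \alpha^{n_j/(2d)}$, while for a $K_{d,d}$ component the extra factor $\alpha^{n_j/(2d)}$ is simply absent (equivalently, replaced by $1$). Multiplying over all components and using $\mysum_j n_j = n$, I obtain
\[ i(G) \le (2^{d+1}-1)^{n/(2d)} \cdot \alpha^{\,M/(2d)}, \qquad M = \mysum_{j:\, C_j \not\cong K_{d,d}} n_j. \]

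The decisive observation is that $M$ is bounded below by a positive constant depending only on $d$. This is where the hypothesis "$G$ is not the Alon graph" enters: if every component were isomorphic to $K_{d,d}$, then $G$ would be a disjoint union of copies of $K_{d,d}$, i.e.\ the Alon graph; hence at least one component $C_{j_0}$ satisfies $C_{j_0} \not\cong K_{d,d}$. Any connected $d$-regular graph has at least $d+1$ vertices, so $n_{j_0} \ge d+1$ and therefore $M \ge d+1 > 0$. Since $0 < \alpha < 1$, the factor $\alpha^{M/(2d)}$ is largest when $M$ is smallest, so $\alpha^{M/(2d)} \le \alpha^{(d+1)/(2d)}$. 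Setting $D = \alpha^{(d+1)/(2d)}$ then gives $D < 1$ depending only on $d$ and $i(G) \le D\,(2^{d+1}-1)^{n/(2d)}$, as required.

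The only genuine subtlety—and the step I would watch most carefully—is the translation of "not the Alon graph" into "has a component $\not\cong K_{d,d}$", together with the uniform lower bound $n_{j_0} \ge d+1$: once a single deviating component is isolated, its contribution of a fixed factor $\alpha^{(d+1)/(2d)} < 1$ is precisely what forces $D < 1$ uniformly in $n$. (The degenerate case $d = 1$ is vacuous, since every connected $1$-regular graph equals $K_2 = K_{1,1}$, so no non-Alon $1$-regular graph exists; for $d \ge 2$ the witness $K_{d+1}$ shows the bound $M \ge d+1$ cannot in general be improved.)
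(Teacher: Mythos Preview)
Your proof is correct and follows essentially the same route as the paper: decompose into connected components, use multiplicativity of $i(\cdot)$, apply Corollary~\ref{DmitrievAny} to the non-$K_{d,d}$ components, and bound the resulting factor $\alpha^{M/(2d)}$ using a uniform lower bound on the size of a deviating component. The only cosmetic differences are that the paper singles out just one bad component (rather than summing over all of them to form $M$) and uses the slightly weaker lower bound $n_1 \ge d$, yielding $D = \alpha^{1/2}$ instead of your $D = \alpha^{(d+1)/(2d)}$.
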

\begin{proof}
    Фиксируем $d$. Для него, согласно следствию~\ref{DmitrievAny}, существует константа $D_1$, меньшая $2^{d + 1} - 1$ такая, что для любого двудольного регулярного связного графа, не изоморфоного $K_{d, d}$ выполнено $i(G) \le D_1 ^ \frac{n}{2 d}$.
    
    Рассмотрим произвольный $d$-регулярный граф $G$, неизоморфный графу Алона. Хотя бы одна из его компонент связности не изоморфна $K_{d, d}$. Пусть компоненты связности $G_1, G_2, \ldots, G_t$ графа $G$ имеют $n_1, n_2, \ldots, n_t$ вершин, $\sum_{j=1}^t n_j = n$, причем $G_1$ не изоморфна $K_{d, d}$.
    
    Тогда:
    \[
    i(G) = \myproduct_{j=1}^t i(G_i) = i(G_1) \myproduct_{j=2}^t i(G_i) \le D_1^{\frac{n_1}{2d}} \myproduct_{j=2}^t  (2^{d + 1} - 1)^\frac{n_i}{2d}= \pars{\frac{D_1}{2^{d + 1} - 1}}^{\frac{n_1}{2d}} \myproduct_{j=1}^t  (2^{d + 1} - 1)^\frac{n_i}{2d} 
    \]
    
    Положив $D := \sqrt{\frac{D_1}{2^{d + 1} - 1}} < 1$ и заметив, что $n_1 \ge d$ получаем:
    \[i(G) \le D  \pars{2^{d + 1} - 1}^\frac{\mysum {n_i}}{2d} = D  \pars{2^{d + 1} - 1}^\frac{n}{2d} ,\] 
    что и требовалось.
\end{proof}

Таким образом, мы показали, что оценка в гипотезе~\ref{Hyp} достигается только на графах Алона, а для всех остальных графов количество независимых множеств отличается, как минимум в константу (зависящую от $d$) раз. Следущим шагом может стать изучение, на каких графах достигается максимум числа незвисимых множеств при $n$ не кратных $2d$, а также в графах, не содержащих $K_{d, d}$ как компонент связности.

\end{document}